\newtheorem{thm}{Theorem}[section]
\newtheorem{lem}[thm]{Lemma}
\theoremstyle{definition}
\newtheorem{rem}{Remark}[section]
\numberwithin{equation}{section}
 \newcommand{\be}{\begin{equation}}
 \newcommand{\ee}{\end{equation}}
 \newcommand\bes{\begin{eqnarray}}
 \newcommand\ees{\end{eqnarray}}
 \newcommand{\bess}{\begin{eqnarray*}}
 \newcommand{\eess}{\end{eqnarray*}}
\begin{document}
\title[Boundedness in a Keller-Segel system with logistic source]
      {Boundedness in a quasilinear fully parabolic Keller-Segel system with logistic source}%
\author[Zhang]{Qingshan Zhang }%
\address{Department of Mathematics, Southeast University, Nanjing 211189, P. R. China}
\email{qingshan11@yeah.net}


\author[Li]{Yuxiang Li }%
\address{Department of Mathematics, Southeast University, Nanjing 211189, P. R. China}
\email{lieyx@seu.edu.cn}

\thanks{Supported in part by National Natural Science Foundation of China (No. 11171063).}

\subjclass[2000]{35K59, 92C17, 35K55.}%
\keywords{quasilinear chemotaxis system, logistic source, global solution, boundedness.}


\begin{abstract}
This paper deals with the Neumann boundary value problem for the system
\begin{eqnarray*}
  \left\{\begin{array}{lll}
     \medskip
     u_t=\nabla\cdot\left(D(u)\nabla u\right)-\nabla\cdot\left(S(u)\nabla v\right)+f(u),&{} x\in\Omega,\ t>0,\\
     \medskip
     v_t=\Delta v-v+u,&{}x\in\Omega,\ t>0
  \end{array}\right.
\end{eqnarray*}
in a smooth bounded domain $\Omega\subset\mathbb{R}^n$ $(n\geq1)$, where the functions $D(u)$ and $S(u)$ are supposed to be smooth satisfying
$D(u)\geq Mu^{-\alpha}$ and $S(u)\leq Mu^{\beta}$ with $M>0$, $\alpha\in\mathbb{R}$ and $\beta\in\mathbb{R}$ for all $u\geq1$, and the logistic source $f(u)$ is smooth fulfilling $f(0)\geq0$ as well as $f(u)\leq a-\mu u^{\gamma}$ with $a\geq0$, $\mu>0$ and $\gamma\geq1$ for all $u\geq0$.
It is shown that if
\begin{eqnarray*}
  \alpha+2\beta<\left\{\begin{array}{lll}
     \medskip
     \gamma-1+\frac{2}{n},&{} \mbox{for}\ 1\leq\gamma<2,\\
     \medskip
     \gamma-1+\frac{4}{n+2},&{} \mbox{for}\ \gamma\geq2,
  \end{array}\right.
\end{eqnarray*}
then for sufficiently smooth initial data the problem possesses a unique global classical solution which is uniformly bounded.
\end{abstract}
\maketitle

\section{Introduction}
In this paper we consider the initial-boundary value problem for the parabolic-parabolic quasilinear chemotaxis system with logistic source
\begin{eqnarray}\label{KS}
  \left\{\begin{array}{lll}
     \medskip
     u_t=\nabla\cdot\left(D(u)\nabla u\right)-\nabla\cdot\left(S(u)\nabla v\right)+f(u),&{} x\in\Omega,\ t>0,\\
     \medskip
     v_t=\Delta v-v+u,&{}x\in\Omega,\ t>0,\\
     \medskip
     \frac{\partial u}{\partial \nu}=\frac{\partial v}{\partial \nu}=0,&{} x\in\partial\Omega, t>0,\\
     \medskip
     u(x,0)=u_0(x),\quad v(x,0)=v_0(x),&{}x\in\Omega
  \end{array}\right.
\end{eqnarray}
in a bounded domain $\Omega\subset\mathbb{R}^n$ $(n\geq1)$ with smooth boundary $\partial\Omega$, where $u=u(x,t)$ denotes the density of bacteria and $v=v(x,t)$ is the concentration of oxygen. $\frac{\partial}{\partial \nu}$ represents differentiation with respect to the outward normal $\nu$ on $\partial\Omega$. The initial data $u_0\in C^0(\bar{\Omega})$ and $v_0\in W^{1,\theta}(\Omega)$ with $\theta>\max\{2, n\}$ are nonnegative functions. The parameter functions $D(u)$, $S(u)$ with $S(0)=0$ from $C^2([0, \infty))$ are supposed that
\begin{equation}\label{D}
D(u)\geq M_1(u+1)^{-\alpha}\quad\mbox{for all}\ u\geq0
\end{equation}
with $M_1>0$ and $\alpha\in\mathbb{R}$,
\begin{equation}\label{S}
S(u)\leq M_2(u+1)^{\beta}\quad\mbox{for all}\ u\geq0
\end{equation}
with $M_2>0$ and $\beta\in\mathbb{R}$, as well as $f(u)$ is smooth satisfying $f(0)\geq0$ and
\begin{equation}\label{logistic source}
f(u)\leq a-\mu u^{\gamma}\quad\mbox{for all}\ u\geq0
\end{equation}
with $a\geq0$, $\mu>0$ and $\gamma\geq1$.

\vskip 3mm

In 1970, Keller and Segel \cite{KS} proposed the chemotaxis system (\ref{KS}) to describe the biased movement of biological cell in response to chemical gradients. Since then the model has attracted significant interest in mathematical biology, and one of the main issues is under what conditions the solutions of (\ref{KS}) blow up or exist globally.

When $D(u)=1$, $S(u)=\chi u$ and $f(u)\equiv0$, system (\ref{KS}) corresponds to the so-called minimal model, which has been extensively studied. It proved that the solutions never blow up if $n=1$ \cite{Osaki-FE-2001}. In the two-dimensional case, if $\int_{\Omega}u_0<4\pi$ the solutions are global and bounded \cite{Nagai-FE-1997}, whereas $\int_{\Omega}u_0>4\pi$ and in the case $n\geq3$ the solutions blow up in finite time \cite{Horstmann-JAM-2001,Winkler-JMPA-2013}. In many applications the blow-up phenomena is an extreme case, so a logistic growth restriction of type (\ref{logistic source}) in (\ref{KS}) is expected to rule out the possible of blow-up for solutions. When $D(u)=1$, $S(u)=\chi u$ and $f(u)\leq a-\mu u^{2}$ in the model (\ref{KS}), all solutions are global and bounded provided that $n\leq2$ or $\mu>\mu_0$ with some $\mu_0>0$ in higher dimensions $n\geq3$ \cite{Osaki-NA-2002,Osaki-FE-2001,Winkler-CPDE-2010}. In the special case $f(u)=u-\mu u^2$, whenever $\frac{\mu}{\chi}$ is suitably large the solution $(u,v)$ stabilizes to the spatially homogeneous steady state $(\frac{1}{\mu}, \frac{1}{\mu})$ as $t\rightarrow\infty$ \cite{Winkler-JDE-2014}. Moreover, when $n\geq3$ there exists at least one global weak solution for any $\mu>0$ \cite{Lankeit-JDE-2015}.
However, it is unclear whether in higher dimensions $n\geq3$, the logistic source $f(u)$ with $\gamma=2$ in the problem (\ref{KS}) might be sufficient to rule out blow-up for arbitrarily small $\mu>0$ \cite{Winkler-CPDE-2010}.

Superlinear logistic growth are not always rule out chemotactic collapse in the Keller-Segel model. The initial-boundary value problem for the related system
\begin{eqnarray*}
  \left\{\begin{array}{lll}
     \medskip
     u_t=\Delta u-\chi\nabla\cdot(u\nabla v)+\lambda u-\mu u^{\gamma},&{} x\in\Omega,\ t>0,\\
     \medskip
     0=\Delta v-m(t)+u,&{}x\in\Omega,\ t>0
  \end{array}\right.
\end{eqnarray*}
with $m(t):=\frac{1}{|\Omega|}\int_{\Omega}u(x,t)$ was considered in \cite{Winkler-JMAA-2011}. It was shown that if $\lambda\in(1, \frac{3}{2}+\frac{1}{2n-2})$ with $n\geq5$ there exist initial data such that the solutions blow up in finite time.

On the other hand, the volume-filling effect can also prevent blow-up \cite{Hillen&Painter-AAM-2001,Painter&Hillen-CAMQ-2002}. In the case $D(u)=1$ and $f(u)\equiv0$ in (\ref{KS}). Horstmann and Winkler \cite{Horstmann&Winkler-JDE-2005} proved that if $S(u)\leq K(u+1)^{\beta}$
with $\beta<\frac{2}{n}$ and some $K>0$ the solutions are global and bounded, while if $S(u)\geq K(u+1)^{\beta}$
with $\beta>\frac{2}{n}$ and some $K>0$ the solutions blow up in finite or infinite time.

As to the Neumann boundary value problem for the associated parabolic-elliptic system
\begin{eqnarray*}
  \left\{\begin{array}{lll}
     \medskip
     u_t=\nabla\cdot\left(D(u)\nabla u\right)-\nabla\cdot\left(S(u)\nabla v\right),&{} x\in\Omega,\ t>0,\\
     \medskip
     0=\Delta v-m(t)+u,&{}x\in\Omega,\ t>0
  \end{array}\right.
\end{eqnarray*}
with $D(u)\simeq u^{-\alpha}$ and $S(u)\simeq u^{\beta}$ as $u\simeq \infty$, it was proved that if $\alpha+\beta<\frac{2}{n}$ the solutions are global and bounded, whereas if $\alpha+\beta>\frac{2}{n}$ there exist solutions that become unbounded in finite time \cite{Djie}.

The model (\ref{KS}) with $f(u)\equiv0$ has also been extensively studied \cite{Tao&Winkler-JDE-2012,Ishida-JDE-2014,Winkler-MMAS-2010,Stinner-JDE-2015}. It was shown that
\begin{itemize}
  \item if $\frac{S(u)}{D(u)}\leq K(u+\varepsilon)^{\theta}$ with $\theta<\frac{2}{n}$ and $K>0$ for some $\varepsilon\geq0$ and for all $u>0$, then all solutions to (\ref{KS}) are global and uniformly bounded \cite{Tao&Winkler-JDE-2012,Ishida-JDE-2014};
  \item if $\frac{S(u)}{D(u)}\geq K(u+1)^{\theta}$ with $\theta>\frac{2}{n}$ $(n\geq2)$ and $K>0$ for all $u>0$, then for any $m>0$, (\ref{KS}) possesses finite-time blow-up solutions with the mass $\int_{\Omega}u_0=m$ \cite{Winkler-MMAS-2010}.
\end{itemize}
The exponent $\frac{2}{n}$ seems critical for the finite-time blow-up and global existence properties of (\ref{KS}) with $f(u)\equiv0$.

The fully parabolic Keller-Segel system with logistic source (\ref{KS}) was considered in the recent papers \cite{Cao-JMAA-2013,Wang-DCDS-2014,li2015}. The authors proved that when $\Omega\subset\mathbb{R}^n$ ($n\geq2$) is a bounded {\it{convex}} domain with smooth boundary, global bounded classical solutions exist provided that $D(u)$, $S(u)$ and $f(u)$ satisfy (\ref{D})-(\ref{logistic source}) with $\alpha+\beta\in(0,\frac{2}{n})$. In \cite{li2015}, the authors extended the result to the degenerate case on non-convex domain under the same condition. Connected to the later two results, it is a natural question to ask:
\begin{quote}
(Q1) {\it {What role does the logistic source play in the system (\ref{KS})?}}
\end{quote}
Although a partial answer to (Q1) show that for any choice of $\beta<1$ the logistic damping rule out the occurrence of blow-up for the related special case $f(u)\leq a-\mu u^{2}$ provided that $S$ satisfies the condition of algebraic growth \cite{Cao-JMAA-2013}, it still remain to analysis:
\begin{quote}
(Q2) {\it {Can we provide a explicit condition involving the nonlinear diffusion, nonlinear chemosensitivity and the logistic-growth source to ensure global bounded solutions in the system (\ref{KS}) ?}}
\end{quote}

\vskip 3mm

In the present paper, our purpose is to answer (Q1) and (Q2). Namely, we shall give a general condition on $\alpha$, $\beta$ and $\gamma$, which guarantees the global existence and boundedness of classical solutions to (\ref{KS}) in {\it{non-convex}} bounded domains. Our main result is stated as follows.
\begin{thm}\label{main result}
Suppose that $\Omega\subset\mathbb{R}^n$ ($n\geq1$) is a domain with smooth boundary. Let (\ref{D})-(\ref{logistic source}) hold with $M_1>0$, $M_2>0$, $\alpha\in\mathbb{R}$, $\beta\in\mathbb{R}$, $a\geq0$, $\mu>0$ and $\gamma\geq1$. If
\begin{eqnarray*}
  \alpha+2\beta<\left\{\begin{array}{lll}
     \medskip
     \gamma-1+\frac{2}{n},&{} \mbox{if}\ 1\leq\gamma<2,\\
     \medskip
     \gamma-1+\frac{4}{n+2},&{} \mbox{if}\ \gamma\geq2,
  \end{array}\right.
\end{eqnarray*}
then for any nonnegative $u_0\in C^0(\bar{\Omega})$ and $v_0\in W^{1,\theta}(\Omega)$ with $\theta>\max\{2, n\}$, the problem (\ref{KS}) admits a unique global bounded classical solution.
\end{thm}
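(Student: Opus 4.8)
The plan is to prove Theorem~\ref{main result} via the standard local-existence-plus-a-priori-estimates scheme, with the bulk of the work going into an $L^p$-type bootstrap argument that is adapted to the presence of the logistic source. First I would invoke a routine local existence result (fixed-point in a suitable parabolic H\"older or $W^{1,\theta}$ setting, together with parabolic regularity theory for the uniformly parabolic $v$-equation) to obtain a unique classical solution $(u,v)$ on a maximal time interval $[0,T_{\max})$, with the usual extensibility criterion: if $T_{\max}<\infty$ then $\|u(\cdot,t)\|_{L^\infty(\Omega)}\to\infty$ as $t\uparrow T_{\max}$. By the parabolic comparison principle applied to the $u$-equation (using $f(0)\ge0$ and $S(0)=0$) and to the $v$-equation we get nonnegativity of $u$ and $v$. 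Integrating the first equation and using $f(u)\le a-\mu u^\gamma$ together with Jensen's inequality gives a uniform-in-time bound $\int_\Omega u(\cdot,t)\le C$, which is the basic mass estimate that seeds everything else.

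The heart of the proof is a differential inequality for the functional $y_p(t):=\int_\Omega (u+1)^p$ for large $p$. Testing the $u$-equation with $(u+1)^{p-1}$ and integrating by parts produces, schematically,
\begin{eqnarray*}
\frac{1}{p}\frac{d}{dt}\int_\Omega (u+1)^p + (p-1)M_1\int_\Omega (u+1)^{p-2-\alpha}|\nabla u|^2
&\le& (p-1)M_2\int_\Omega (u+1)^{p-2+\beta}|\nabla u||\nabla v| \\
&& {}+ a\int_\Omega(u+1)^{p-1} - \mu\int_\Omega (u+1)^{p-1}u^\gamma .
\end{eqnarray*}
After Young's inequality on the cross term, the gradient term $\int_\Omega (u+1)^{p-2-\alpha}|\nabla u|^2 = c_p\int_\Omega |\nabla (u+1)^{(p-\alpha)/2}|^2$ absorbs part of the chemotactic contribution, and one is left needing to control $\int_\Omega (u+1)^{p+\alpha+2\beta-2}|\nabla v|^2$ in terms of that good gradient term, the dissipative logistic term $\int_\Omega(u+1)^{p-1+\gamma}$, and lower-order quantities. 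To estimate the $\nabla v$ integral I would test the $v$-equation with $(u+1)^{p+\alpha+2\beta-2}$ (or use smoothing estimates for the heat semigroup on $\|\nabla v\|_{L^q}$ in terms of $\|u\|_{L^r}$), converting everything into Lebesgue norms of $u$; then Gagliardo--Nirenberg interpolation — together with the uniform $L^1$ bound on $u$ — lets one interpolate the resulting super-critical powers of $u$ between the good Dirichlet term and the logistic term. This is exactly where the exponent condition $\alpha+2\beta<\gamma-1+\frac2n$ (resp.\ $\gamma-1+\frac4{n+2}$ for $\gamma\ge2$) enters: it is precisely what makes the relevant GN exponent strictly less than $1$, so that Young's inequality yields
\begin{eqnarray*}
\frac{d}{dt}\int_\Omega (u+1)^p + \int_\Omega (u+1)^p \le C(p),
\end{eqnarray*}
hence a uniform-in-time bound on $\|u(\cdot,t)\|_{L^p(\Omega)}$ for every finite $p$ (the two cases of $\gamma$ arise from whether the $\gamma=2$-type term or a purely GN-based term is the binding constraint in the interpolation). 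The dichotomy between $1\le\gamma<2$ and $\gamma\ge2$ presumably reflects that for $\gamma<2$ one interpolates directly against $(u+1)^{p-1+\gamma}$, whereas for $\gamma\ge2$ one only uses the $u^2$-strength of the damping (so raising $\gamma$ past $2$ gives no extra room) and instead exploits the extra integrability coming from an $L^2((0,T);H^1)$-type bound on $v$, which shifts $\frac2n$ to $\frac4{n+2}$.

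From a uniform $L^p$ bound on $u$ with $p$ large (say $p>n$), standard parabolic regularity for the $v$-equation gives $\nabla v$ bounded in $L^\infty$ uniformly in time; feeding this back into the $u$-equation and running a Moser-type iteration (or the Alikakos $L^p\to L^\infty$ argument, as in the works cited for the $f\equiv0$ case) yields a uniform bound on $\|u(\cdot,t)\|_{L^\infty(\Omega)}$. By the extensibility criterion this forces $T_{\max}=\infty$, and the same bounds give uniform boundedness of the global solution; higher regularity then follows from Schauder estimates. The main obstacle I anticipate is the bookkeeping in the testing-function step: one must choose the test exponent in the $v$-equation and the interpolation parameters so that \emph{all} of the supercritical powers of $(u+1)$ generated by the chemotaxis term are simultaneously dominated, uniformly in $p$, by the Dirichlet term plus the logistic term — and verifying that the stated inequality on $\alpha+2\beta$ is exactly the sharp threshold for this to be possible (including correctly handling the borderline behavior when $\gamma=2$ and the role of the $a$-term for large $p$) is the delicate part. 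A secondary technical point is handling non-convex $\Omega$, where the boundary term $\int_{\partial\Omega}\partial_\nu|\nabla v|^2$ arising in $L^2$-estimates for $\nabla v$ has an unfavorable sign; this is circumvented by working with the $v$-equation in Lebesgue norms via the Neumann heat semigroup rather than with $\int_\Omega|\nabla v|^{2q}$ energy identities, so that no curvature term ever appears.
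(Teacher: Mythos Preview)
Your overall scaffold (local existence, $L^p$ energy for $u$, then Moser iteration) matches the paper, but the core step is handled quite differently, and your proposed alternative has a gap.

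\textbf{What the paper actually does.} The paper does \emph{not} avoid the $\int_\Omega |\nabla v|^{2q}$ energy; on the contrary, the whole argument hinges on the coupled functional
\[
y(t)=\frac{1}{p}\int_\Omega (u+1)^p+\frac{1}{2q}\int_\Omega |\nabla v|^{2q}.
\]
After Young's inequality in the $u$--test, the chemotaxis term is split against the logistic dissipation $\int_\Omega (u+1)^{p+\gamma-1}$, leaving $\int_\Omega |\nabla v|^{\theta_1}$ (and a second term $\int_\Omega |\nabla v|^{\theta_2}$ coming from the $|\nabla v|^{2q}$--test). These are controlled by Gagliardo--Nirenberg, interpolating between the good dissipation $\big\|\nabla|\nabla v|^q\big\|_{L^2}^2$ and the seed bound $\|\nabla v\|_{L^s}$. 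The boundary integral $\int_{\partial\Omega}|\nabla v|^{2q-2}\partial_\nu|\nabla v|^2$ on a non-convex domain is \emph{kept} and estimated by the Mizoguchi--Souplet curvature inequality $\partial_\nu|\nabla v|^2\le 2k|\nabla v|^2$ together with a trace inequality and another GN step; no heat-semigroup detour is used.

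\textbf{Where your route stalls.} You propose to replace the $|\nabla v|^{2q}$ energy by either testing the $v$--equation with $(u+1)^{p+\alpha+2\beta-2}$ or by heat-semigroup $L^r\!\to\! W^{1,q}$ smoothing, so as to ``convert everything into Lebesgue norms of $u$''. Neither option is worked out, and the first does not produce $|\nabla v|^2$ at all. More seriously, after Young the residual term is $\int_\Omega |\nabla v|^{\theta_1}$ with $\theta_1=\frac{2(p+\gamma-1)}{\gamma+1-\alpha-2\beta}$, which is large for large $p$; heat-semigroup smoothing from the only available a~priori input $u\in L^1$ gives $\nabla v\in L^s$ merely for $s<\frac{n}{n-1}$, far below $\theta_1$. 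Without the dissipation $\big\|\nabla|\nabla v|^q\big\|_{L^2}^2$ (which is exactly what the coupled energy supplies), there is no single ODI that closes for large $p$; at best you would need an iterative bootstrap $\|u\|_{L^{r_k}}\Rightarrow\|\nabla v\|_{L^{q_k}}\Rightarrow\|u\|_{L^{r_{k+1}}}$, and you have not argued that this iteration reaches every $p$ under the stated threshold on $\alpha+2\beta$.

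\textbf{The dichotomy.} Your reading of the $\gamma$--split is also off. In \emph{both} regimes the full logistic term $\int_\Omega(u+1)^{p+\gamma-1}$ is used in the Young splitting, and in both regimes the threshold grows linearly in $\gamma$; so ``raising $\gamma$ past $2$ gives no extra room'' is false. The only difference is the seed for the GN interpolation on $|\nabla v|$: for $1\le\gamma<2$ one has just $\|\nabla v\|_{L^s}$ with $s<\frac{n}{n-1}$, whereas for $\gamma\ge 2$ an additional uniform-in-time bound $\|\nabla v\|_{L^2}\le C$ (obtained by combining $\frac{d}{dt}\int u$ with the $-\Delta v$--test of the $v$--equation) lets one take $s=2$, which is what shifts $\frac{2}{n}$ to $\frac{4}{n+2}$.
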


\begin{rem}
(i) Theorem \ref{main result} provides the general admissible parameters for global boundedness in (\ref{KS}). The result is valid on the non-convex domain and without any further assumptions on the size of $a\geq0$ and $\mu>0$.

(ii) In the case $\beta\leq0$ or $\gamma>\beta+1$ for $\beta>0$ or $\gamma>\beta+1-\frac{2(n-2)}{n(n+2)}$ for $\beta\geq1+\frac{2(n-2)}{n(n+2)}$, our result extends the recent work \cite{Wang-DCDS-2014} and \cite{li2015} which assert boundedness under the condition $\alpha+\beta<\frac{2}{n}$.

(iii) For the most interesting classical model
\begin{eqnarray}\label{mini model}
  \left\{\begin{array}{lll}
     \medskip
     u_t=\Delta u-\chi\nabla\cdot(u\nabla v)+a-\mu u^{\gamma},&{} x\in\Omega,\ t>0,\\
     \medskip
     v_t=\Delta v-v+u,&{}x\in\Omega,\ t>0,
  \end{array}\right.
\end{eqnarray}
our result shows that if $\gamma>3-\frac{4}{n+2}$, the model (\ref{mini model}) possesses a unique global classical solution for arbitrarily small $\mu>0$ and any bounded smooth domain $\Omega\subset\mathbb{R}^n$. This improves the recent result \cite[R5]{xiang-JDE-2015}. In particular, we rule out a chemotactic collapse in model (\ref{mini model}) with a cubic growth source $f(u)=u(u-b)(1-u)$ ($0<b<\frac{1}{2}$) for any biological parameters and any bounded smooth domain $\Omega\subset\mathbb{R}^n$.
\end{rem}



\section{Preliminaries}
To begin with, let us first state one result concerning local well-posedness of the problem (\ref{KS}) and its proof can be found in \cite{Tao&Winkler-JDE-2012,Wang-DCDS-2014,Winkler-CPDE-2010,li2015}.

\begin{lem}\label{local existence}
Let $D(u)$ and $S(u)$ satisfy (\ref{D}) and (\ref{S}), respectively. Suppose that $f(u)\in W^{1,\infty}_{loc}(\mathbb{R})$ satisfying $f(0)\geq0$, and that $u_0\in C^0(\bar{\Omega})$ and $v_0\in W^{1,\theta}(\Omega)$ with $\theta>\max\{2, n\}$ are nonnegative functions. Then there exist $T_{\max}\in(0, \infty]$ and a uniquely determined
pair $(u, v)$ of nonnegative functions
\begin{eqnarray*}
&&u\in C^0(\bar{\Omega}\times[0, T_{\max}))\cap C^{2,1}(\bar{\Omega}\times(0, T_{\max})),\\
&&v\in C^0(\bar{\Omega}\times[0, T_{\max}))\cap C^{2,1}(\bar{\Omega}\times(0, T_{\max}))\cap L_{loc}^{\infty}([0, T_{\max});W^{1,\theta}(\Omega))
\end{eqnarray*}
solves (\ref{KS}) in the classical sense. In addition, if $T_{\max}<\infty$, then
 \begin{equation*}
\|u(\cdot, t)\|_{L^{\infty}(\Omega)}+\|v(\cdot, t)\|_{W^{1,\theta}(\Omega)}\rightarrow\infty\quad\mbox{as}\ t\nearrow T_{\max}.
\end{equation*}
\end{lem}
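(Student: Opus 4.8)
The plan is to recast the system (\ref{KS}) as an abstract quasilinear parabolic system and apply the general existence theory of Amann for such systems. Setting $w=(u,v)^{T}$, the two evolution equations in (\ref{KS}) together with the homogeneous Neumann conditions read
\[
w_t=\nabla\cdot\bigl(A(w)\nabla w\bigr)+\Phi(w),\qquad \partial_\nu w=0,
\]
where
\[
A(w)=\begin{pmatrix} D(u) & -S(u)\\ 0 & 1\end{pmatrix},\qquad
\Phi(w)=\begin{pmatrix} f(u)\\ u-v\end{pmatrix}.
\]
Because $D,S\in C^2([0,\infty))$ and $D(u)\ge M_1(u+1)^{-\alpha}>0$ by (\ref{D}), the matrix $A(w)$ is upper triangular with strictly positive diagonal entries $D(u)$ and $1$; its eigenvalues are therefore $D(u)$ and $1$, both lying in the open right half-plane, so the system is normally elliptic in Amann's sense. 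Since $\theta>\max\{2,n\}$, the space $W^{1,\theta}(\Omega)$ embeds into $C^0(\bar\Omega)$ and serves as an admissible initial-trace space, so that Amann's local existence theorem provides a maximal existence time $T_{\max}\in(0,\infty]$ together with a unique classical solution $(u,v)$ enjoying the stated regularity, smooth for $t>0$ and attaining the initial data continuously.

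Next I would establish nonnegativity by the parabolic comparison principle, treating the two components in turn. For $u$, the structural conditions $S(0)=0$ and $f(0)\ge0$ make the constant $\underline u\equiv0$ a subsolution of the first equation: the chemotactic flux $S(u)\nabla v$ vanishes where $u=0$, while the reaction satisfies $f(0)\ge0$. Reading the $u$-equation with the (already regular) $v$ frozen as a scalar quasilinear parabolic equation, the maximum principle then forces $u\ge0$ on $[0,T_{\max})$ whenever $u_0\ge0$. Granted $u\ge0$, the second equation $v_t=\Delta v-v+u$ is linear with nonnegative source and nonnegative datum $v_0\ge0$, so positivity of the Neumann heat semigroup (equivalently, the maximum principle) yields $v\ge0$.

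Finally, the extensibility statement requires upgrading Amann's abstract blow-up alternative---which asserts only that $T_{\max}<\infty$ forces the solution norm in the underlying fractional-power space to become unbounded---to the concrete criterion involving $\|u(\cdot,t)\|_{L^{\infty}(\Omega)}+\|v(\cdot,t)\|_{W^{1,\theta}(\Omega)}$. I expect this to be the main technical point. The strategy is a bootstrap argument: assuming for contradiction that both $\|u(\cdot,t)\|_{L^{\infty}(\Omega)}$ and $\|v(\cdot,t)\|_{W^{1,\theta}(\Omega)}$ stay bounded as $t\nearrow T_{\max}<\infty$, one uses $L^p$-$L^q$ smoothing estimates for the Neumann heat semigroup in the variation-of-constants representation of $v$ to control higher norms of $\nabla v$, and then standard linear parabolic regularity (Hölder/$W^{2,p}$ Schauder-type estimates) applied to the $u$-equation---whose coefficients are now bounded and whose principal part is uniformly parabolic---to recover uniform bounds on $u$, $v$ and their derivatives up to $t=T_{\max}$. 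Such bounds contradict the abstract blow-up alternative and thereby yield the stated criterion. The delicate aspect is that $D(u)$ is bounded below only by a negative power of $u+1$, so one must first invoke the a priori $L^{\infty}$ bound on $u$ to guarantee uniform ellipticity of the $u$-equation before the linear regularity theory becomes applicable.
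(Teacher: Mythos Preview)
Your outline is sound and matches the standard route taken in the references the paper cites; note, however, that the paper itself does not give a proof of this lemma at all---it simply states the result and refers to \cite{Tao&Winkler-JDE-2012,Wang-DCDS-2014,Winkler-CPDE-2010,li2015} for the argument. What you have written is essentially a condensed version of what one finds in those sources: Amann's quasilinear theory (or, in some of them, a direct Banach fixed-point argument in $C^0\times W^{1,\theta}$) for local existence and the abstract blow-up alternative, the comparison principle for nonnegativity exploiting $S(0)=0$ and $f(0)\ge0$, and a semigroup/bootstrap step to convert the abstract alternative into the concrete $L^\infty\times W^{1,\theta}$ criterion.

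One small caution: when you invoke Amann's framework you tacitly place both components in the same initial-trace space, but here $u_0$ is only in $C^0(\bar\Omega)$ while $v_0\in W^{1,\theta}(\Omega)$. In the cited papers this asymmetry is handled either by a direct contraction-mapping argument in the product space $C^0(\bar\Omega)\times W^{1,\theta}(\Omega)$ (as in \cite{Horstmann&Winkler-JDE-2005,Winkler-CPDE-2010}) or by first applying Amann's theory to slightly regularized data and then passing to the limit. Your sketch glosses over this point, but it is routine and does not affect the validity of the approach.
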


We next give the following basic estimates in spatial Lebesgue spaces for  $u$ and $\nabla v$.

\begin{lem}\label{basic est.}
Suppose that $D$, $S$ and $f$ satisfy (\ref{D})-(\ref{logistic source}), respectively. Then there exists $C>0$ such that the solution of (\ref{KS}) fulfils
\begin{equation}\label{u1}
\|u(\cdot,t)\|_{L^1(\Omega)}\leq C\quad\mbox{for all}\ t\in(0, T_{\max})
\end{equation}
and
\begin{equation}\label{nabla vs}
\|\nabla v(\cdot,t)\|_{L^s(\Omega)}\leq C\quad\mbox{for all}\ t\in(0, T_{\max}),
\end{equation}
where $s\in[1,\frac{n}{n-1})$. If, in addition, $\gamma\geq2$, then we have
\begin{equation}\label{nabla v2}
\|\nabla v(\cdot,t)\|_{L^2(\Omega)}\leq C\quad\mbox{for all}\ t\in(0, T_{\max}).
\end{equation}
\end{lem}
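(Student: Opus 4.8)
The plan is to derive everything from the mass-type identity for the $u$-equation and from smoothing estimates for the heat semigroup applied to the $v$-equation. First I would integrate the first equation of \eqref{KS} over $\Omega$. Since $\int_\Omega\nabla\cdot(D(u)\nabla u)=\int_\Omega\nabla\cdot(S(u)\nabla v)=0$ by the no-flux boundary conditions, this gives
\begin{equation*}
\frac{d}{dt}\int_\Omega u=\int_\Omega f(u)\leq a|\Omega|-\mu\int_\Omega u^\gamma.
\end{equation*}
Because $\gamma\geq1$, Jensen's (or Hölder's) inequality yields $\int_\Omega u^\gamma\geq|\Omega|^{1-\gamma}\bigl(\int_\Omega u\bigr)^\gamma$, so with $y(t):=\int_\Omega u(\cdot,t)$ we obtain an ODI of the form $y'\leq a|\Omega|-\mu|\Omega|^{1-\gamma}y^\gamma$. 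A standard comparison argument shows $y(t)$ stays bounded by $\max\{y(0),\,(a|\Omega|^\gamma/\mu)^{1/\gamma}\}$, which is \eqref{u1}.

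Next I would pass this $L^1$-bound on $u$ through the second equation. Writing $v(\cdot,t)=e^{t(\Delta-1)}v_0+\int_0^t e^{(t-\sigma)(\Delta-1)}u(\cdot,\sigma)\,d\sigma$ and invoking the well-known $L^p$–$L^q$ gradient estimate for the Neumann heat semigroup (with the exponential factor from the $-v$ term making the time integral converge), one gets
\begin{equation*}
\|\nabla v(\cdot,t)\|_{L^s(\Omega)}\leq C\|\nabla v_0\|_{L^s(\Omega)}+C\int_0^t\bigl(1+(t-\sigma)^{-\frac12-\frac{n}{2}(1-\frac1s)}\bigr)e^{-(t-\sigma)}\|u(\cdot,\sigma)\|_{L^1(\Omega)}\,d\sigma.
\end{equation*}
The time-singular exponent $\frac12+\frac n2(1-\frac1s)$ is $<1$ precisely when $s<\frac{n}{n-1}$, so the integral is finite uniformly in $t$; combined with \eqref{u1} this gives \eqref{nabla vs}. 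For the final assertion, when $\gamma\geq2$ I would first upgrade the bound on $u$: testing the $u$-equation against $1$ more carefully, or integrating $f(u)\leq a-\mu u^\gamma\leq a-\mu u^2$ (valid after absorbing a constant, since $\gamma\geq2$), and then integrating in time on $(t,t+1)$ yields a space-time bound $\int_t^{t+1}\int_\Omega u^2\leq C$ uniformly in $t$. Feeding this $L^2$-in-space-time control of $u$ into the Duhamel formula with $s=2$, where the singular exponent is $\frac12$, again produces a convergent convolution and hence \eqref{nabla v2}.

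The only genuinely delicate point is the last step: a pointwise-in-time $L^1$ bound on $u$ is \emph{not} enough to get $\nabla v\in L^2$ when $n\geq2$, so one must exploit the extra dissipation coming from $\gamma\geq2$ to obtain the local-in-time $L^2$ space-time estimate on $u$, and then be slightly careful that the resulting semigroup convolution (now with an $L^2_t$ rather than $L^\infty_t$ source) still converges — this works because $\int_0^\infty(1+\sigma^{-1/2})^2 e^{-2\sigma}\,d\sigma<\infty$, i.e. the singularity $\sigma^{-1/2}$ is square-integrable near $0$. Everything else is a routine application of the heat-semigroup estimates and the comparison principle for scalar ODEs, so I would not belabor those computations.
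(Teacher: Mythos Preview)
Your arguments for \eqref{u1} and \eqref{nabla vs} are correct and match the paper's approach (the paper simply cites \cite[Lemma~4.1]{Horstmann&Winkler-JDE-2005} for the semigroup step, which is exactly your Duhamel computation).

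The proof of \eqref{nabla v2}, however, contains a genuine error. You claim that
\[
\int_0^\infty\bigl(1+\sigma^{-1/2}\bigr)^2 e^{-2\sigma}\,d\sigma<\infty,
\]
but $(1+\sigma^{-1/2})^2$ behaves like $\sigma^{-1}$ near the origin, and $\int_0^1\sigma^{-1}\,d\sigma=\infty$. So the Cauchy--Schwarz step you describe does not close: the kernel $(t-\sigma)^{-1/2}e^{-(t-\sigma)}$ lies in $L^p(0,\infty)$ only for $p<2$, while your space--time bound $\int_t^{t+1}\|u(\cdot,\sigma)\|_{L^2(\Omega)}^2\,d\sigma\leq C$ places $\sigma\mapsto\|u(\cdot,\sigma)\|_{L^2}$ merely in $L^2$ on unit intervals. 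The convolution is therefore not controlled at the borderline $\gamma=2$. (For $\gamma>2$ your idea can be salvaged: the same integration gives $\|u(\cdot,\sigma)\|_{L^2}\in L^\gamma$ uniformly on unit intervals, and the kernel is in the dual $L^{\gamma'}$ since $\gamma'<2$; but this still leaves the case $\gamma=2$ open.)

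The paper avoids the semigroup route for \eqref{nabla v2} entirely and uses an energy argument instead: test the $v$-equation with $-2\Delta v$ to obtain
\[
\frac{d}{dt}\int_\Omega|\nabla v|^2+2\int_\Omega|\nabla v|^2\leq\frac12\int_\Omega u^2,
\]
and then \emph{couple} this with the mass identity $\frac{d}{dt}\int_\Omega u\leq a|\Omega|-\mu\int_\Omega u^\gamma$. Forming the combination $\int_\Omega u+2\mu\int_\Omega|\nabla v|^2$ produces an ODI whose right-hand side contains $-\mu\int_\Omega(u^\gamma-u^2)$, which is bounded above by a constant precisely when $\gamma\geq2$. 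The key point is that the dangerous $\int_\Omega u^2$ term is absorbed \emph{pointwise in time} by the logistic dissipation, rather than being handled in a time-averaged sense through a convolution.
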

\begin{proof}
Integrating the first equation in (\ref{KS}) and using (\ref{logistic source}) gives
\begin{equation}\label{du1}
\frac{d}{dt}\int_{\Omega}u=\int_{\Omega}f(u)\leq a|\Omega|-\mu\int_{\Omega}u^{\gamma}\quad\mbox{for all}\ t\in(0, T_{\max}).
\end{equation}
Since $\gamma\geq1$, we can find $C_1>0$ such that $u\leq u^{\gamma}+C_1$ and thus derive that
\begin{equation*}
\frac{d}{dt}\int_{\Omega}u=-\mu\int_{\Omega}u+(a+C_1\mu)|\Omega|\quad\mbox{for all}\ t\in(0, T_{\max}).
\end{equation*}
This yields
\begin{equation}\label{du2}
\int_{\Omega}u\leq\left\{\int_{\Omega}u_0, \frac{(a+C_1\mu)|\Omega|}{\mu}\right\}=:C_2\quad\mbox{for all}\ t\in(0, T_{\max}).
\end{equation}
The proof of (\ref{nabla vs}) is based on standard regularity arguments for the heat equation; for details, we refer the reader to \cite[Lemma 4.1]{Horstmann&Winkler-JDE-2005}. Now, we prove (\ref{nabla v2}). Using $-2\Delta v$ as a test function for the second equation in (\ref{KS}), integrating over $\Omega$ and applying Young's inequality, we have
\begin{eqnarray}\label{delta v1}
\frac{d}{dt}\int_{\Omega}|\nabla v|^2+2\int_{\Omega}|\Delta v|^2+2\int_{\Omega}|\nabla v|^2&=&-2\int_{\Omega} u\Delta v\nonumber\\
                                        &\leq&\frac{1}{2}\int_{\Omega}u^2+2\int_{\Omega}|\Delta v|^2
\end{eqnarray}
for all $t\in(0, T_{\max})$. Combining (\ref{du1}) and (\ref{delta v1}), we deduce that
\begin{eqnarray}\label{u+v}
\frac{d}{dt}\left(\int_{\Omega}u+2\mu\int_{\Omega}|\nabla v|^2\right)+4\mu\int_{\Omega}|\nabla v|^2\leq\mu\int_{\Omega}u^2+a|\Omega|-\mu\int_{\Omega}u^{\gamma}
\end{eqnarray}
for all $t\in(0, T_{\max})$. Adding the term $2\int_{\Omega}u$ in both side of (\ref{u+v}) and using (\ref{du2}) yields
\begin{eqnarray*}
\frac{d}{dt}\left(\int_{\Omega}u+2\mu\int_{\Omega}|\nabla v|^2\right)+2\left(\int_{\Omega}u+2\mu\int_{\Omega}|\nabla v|^2\right)\leq-\mu\int_{\Omega}(u^{\gamma}-u^2)+a|\Omega|+2C_2
\end{eqnarray*}
for all $t\in(0, T_{\max})$. This, along with $\gamma\geq2$ by our assumption, gives $C_3>0$ such that
\begin{eqnarray*}
\frac{d}{dt}\left(\int_{\Omega}u+2\mu\int_{\Omega}|\nabla v|^2\right)+2\left(\int_{\Omega}u+2\mu\int_{\Omega}|\nabla v|^2\right)\leq C_3,
\end{eqnarray*}
for all $t\in(0, T_{\max})$, whereby the proof is completed.
\end{proof}

\vskip 3mm

For $p\geq1$, $q\geq1$ and $s\geq1$, we define
\begin{equation}\label{theta1}
\theta_1:=\theta_1(p,q)=\frac{2(p+\gamma-1)}{\gamma+1-\alpha-2\beta},
\end{equation}
\begin{equation}\label{theta2}
\theta_2:=\theta_2(p,q)=\frac{2(q-1)(p+\gamma-1)}{p+\gamma-3},
\end{equation}
\begin{equation}\label{kappa i}
\kappa_i:=\kappa_i(p,q;s)=\frac{\frac{q}{s}-\frac{q}{\theta_i}}{\frac{q}{s}-(\frac{1}{2}-\frac{1}{n})}
\end{equation}
and
\begin{equation*}\label{f i}
f_i(p,q;s):=\frac{\theta_i}{q}\kappa_i(p,q;s)=\frac{\frac{\theta_i}{s}-1}{\frac{q}{s}-(\frac{1}{2}-\frac{1}{n})}
\end{equation*}
for $i=1,2$. Now let state the following results which will be needed in the proof of the boundedness of global solutions. The main idea of the proof is similar to the strategy introduced in \cite{Stinner-JDE-2015}. Since a new parameter $\gamma$ is involved, we prefer to give enough details for the convenience of the reader.
\begin{lem}\label{parameter lem}
Let $n\geq2$, $\gamma\geq1$, $\alpha\in\mathbb{R}$ and $\beta\in\mathbb{R}$. Then for sufficiently large $p>1$,

(i) if $\alpha+2\beta<\gamma-1+\frac{2}{n}$, we can choose $q>1$ such that
\begin{equation}\label{parameter1}
\kappa_i(p,q; \frac{n}{n-1})\in(0,1)\quad\mbox{and}\quad f_i(p,q; \frac{n}{n-1})<2\quad\mbox{for}\ i=1,2;
\end{equation}

(ii) if $\alpha+2\beta<\gamma-1+\frac{4}{n+2}$, there exists $q>1$ fulfilling
\begin{equation}\label{parameter2}
\kappa_i(p,q; 2)\in(0,1)\quad\mbox{and}\quad f_i(p,q; 2)<2\quad\mbox{for}\ i=1,2.
\end{equation}
\end{lem}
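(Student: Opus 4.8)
The plan is to verify the claimed inequalities by direct computation, treating $p$ as a free large parameter and $q$ as the quantity to be tuned. First I would record the asymptotic behavior as $p\to\infty$ of the auxiliary exponents. From \eqref{theta1}, $\theta_1=\frac{2(p+\gamma-1)}{\gamma+1-\alpha-2\beta}$ grows linearly in $p$ with slope $\frac{2}{\gamma+1-\alpha-2\beta}$, which is positive precisely because the hypothesis $\alpha+2\beta<\gamma-1+\frac2n\le\gamma-1+2$ (for $n\ge 2$, so $\frac 2n\le 1$) forces $\gamma+1-\alpha-2\beta>0$. Similarly $\theta_2=\frac{2(q-1)(p+\gamma-1)}{p+\gamma-3}$, and for fixed $q$ this tends to $2(q-1)$ as $p\to\infty$; but since we will let $q$ itself depend on $p$, I would instead keep $q$ symbolic and analyze $\kappa_i$ and $f_i$ as functions of $(p,q)$.

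Next I would unwind the definitions of $\kappa_i$ and $f_i$ at the two relevant values $s=\frac{n}{n-1}$ and $s=2$. Using the simplified form $f_i(p,q;s)=\frac{\theta_i/s-1}{q/s-(\frac12-\frac1n)}$, the condition $f_i<2$ becomes, after clearing the (positive, once $q$ is large enough) denominator, a linear inequality $\theta_i/s-1<2q/s-1+\frac2n$, i.e. $\theta_i<2q+s(\frac2n-\ldots)$-type bound; I would make this explicit for $i=1,2$ separately. For $i=1$: since $\theta_1$ is essentially $\frac{2p}{\gamma+1-\alpha-2\beta}$ to leading order, the inequality $f_1<2$ reads roughly $\frac{2p}{\gamma+1-\alpha-2\beta}<2q+O(1)$, which is satisfiable by choosing $q$ slightly larger than $\frac{p}{\gamma+1-\alpha-2\beta}$. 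For $i=2$: here the key algebraic simplification is that $\theta_2/q=\frac{2(p+\gamma-1)}{p+\gamma-3}\to 2$ as $p\to\infty$, so $f_2(p,q;s)=\frac{\theta_2/s-1}{q/s-(\frac12-\frac1n)}\to\frac{2q/s-1}{q/s-(\frac12-\frac1n)}$, and I would check that the limit is $<2$ precisely when $2(\frac12-\frac1n)<1$, i.e. always for $n\ge 1$; thus $f_2<2$ holds automatically for all large $p$ regardless of $q$, as long as $q/s$ stays bounded below appropriately — so the binding constraint on $q$ comes only from $i=1$ and from $\kappa_i\in(0,1)$.

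The membership $\kappa_i\in(0,1)$ I would handle by noting $\kappa_i=f_i\cdot q/\theta_i$, or more directly from \eqref{kappa i}: $\kappa_i>0$ requires $\frac qs-\frac q{\theta_i}$ and $\frac qs-(\frac12-\frac1n)$ to have the same sign, and $\kappa_i<1$ requires $\frac q{\theta_i}>\frac12-\frac1n$, i.e. $\theta_i<\frac{q}{\frac12-\frac1n}=\frac{2nq}{n-2}$ (for $n\ge3$; the case $n=2$ makes $\frac12-\frac1n=0$ and trivializes the lower obstruction). Combining: I need $\theta_i<s q$ is false in general — rather I need $q$ large enough that $q/s>\frac12-\frac1n$ (easy) and $\theta_i<\frac{2nq}{n-2}$. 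Since $\theta_1\sim \frac{2p}{\gamma+1-\alpha-2\beta}$ and $\theta_2\sim 2q$, the condition from $\theta_2$ is $2q<\frac{2nq}{n-2}$, automatic; the condition from $\theta_1$ is $q>\frac{(n-2)p}{n(\gamma+1-\alpha-2\beta)}$ to leading order. So the window for $q$ is an interval of the form $\big(c_1 p+O(1),\,c_2 p+O(1)\big)$ and I must check $c_1<c_2$: this is exactly where the precise threshold $\gamma-1+\frac2n$ (resp. $\frac4{n+2}$) enters, and producing the strict inequality $c_1<c_2$ from $\alpha+2\beta<\gamma-1+\frac2n$ (resp. $<\gamma-1+\frac4{n+2}$) is the crux of the argument.

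I expect the main obstacle to be bookkeeping: correctly tracking the $O(1)$ corrections to $\theta_i$ (the $\gamma-1$ and $\gamma-3$ shifts) so that the nonemptiness of the $q$-interval survives for $p$ merely ``sufficiently large'' rather than in a limit, and keeping the two cases $s=\frac n{n-1}$ versus $s=2$ — and within each, $i=1$ versus $i=2$ — cleanly separated. The two cases differ only in which value of $s$ is plugged in, so after establishing the general inequalities $f_i(p,q;s)<2$ and $\kappa_i(p,q;s)\in(0,1)$ as explicit conditions on $q$ in terms of $p,s,n,\gamma,\alpha,\beta$, I would simply substitute $s=\frac n{n-1}$ under hypothesis (i) and $s=2$ under hypothesis (ii) and confirm that in each case the interval of admissible $q$ is nonempty for all large $p$, choosing e.g. $q$ to be a fixed affine function of $p$ landing strictly inside.
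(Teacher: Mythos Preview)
Your overall strategy---reducing the conditions to an interval $(c_1 p+O(1),\,c_2 p+O(1))$ for $q$ and checking that the hypothesis makes this nonempty---is exactly the paper's approach, but your treatment of the $i=2$ case contains a genuine error that undermines the argument.

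You claim that $f_2<2$ ``holds automatically for all large $p$ regardless of $q$,'' based on the limit $\theta_2\to 2(q-1)$ as $p\to\infty$ with $q$ fixed. But $q$ cannot be kept fixed: you have already noted that $f_1<2$ forces $q\gtrsim \frac{p}{\gamma+1-\alpha-2\beta}$, so $q$ must grow linearly with $p$. Writing $\lambda=\frac{p+\gamma-1}{p+\gamma-3}>1$, the condition $f_2<2$ is equivalent to $\theta_2<2q+\tfrac{2s}{n}$, i.e.\ $2\lambda(q-1)<2q+\tfrac{2s}{n}$, which rearranges to an \emph{upper} bound
\[
q<\frac{p+\gamma-1}{2}+\frac{s(p+\gamma-3)}{2n}.
\]
For $s=\tfrac{n}{n-1}$ this gives the slope $c_2=\tfrac{n}{2(n-1)}$, and for $s=2$ it gives $c_2=\tfrac{n+2}{2n}$. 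Comparing with the binding lower bound $c_1=\tfrac{1}{\gamma+1-\alpha-2\beta}$ from $f_1<2$ then yields exactly $\alpha+2\beta<\gamma-1+\tfrac{2}{n}$, respectively $\alpha+2\beta<\gamma-1+\tfrac{4}{n+2}$. So the missing upper bound on $q$---and hence the place where the hypothesis actually enters---comes precisely from the constraint you dismissed as automatic. Once you keep $q$ proportional to $p$ in the $f_2$ analysis, your outline matches the paper's proof; the paper organizes this more cleanly by first proving the unified sufficient condition $\theta_i>s$ and $q>\tfrac{\theta_i}{2}-\tfrac{s}{n}$, which for $i=2$ immediately gives the upper bound without any asymptotic reasoning.
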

\begin{rem}
Since in the case $1\leq\gamma<2$, we only obtain $\|\nabla v\|_{L^s(\Omega)}\leq C$ with $s\in[1,\frac{n}{n-1})$ according to Lemma \ref{basic est.}. We need to remark here if (\ref{parameter1}) is true, by continuity argument we can choose $s$ sufficiently close to $\frac{n}{n-1}$ satisfies $\kappa_i(p,q; s)\in(0,1)$ and $f_i(p,q; s)<2$ ($i=1,2$) when $p$ and $q$ are fixed. Hence it is enough to focus on the case $s=\frac{n}{n-1}$.
\end{rem}

\noindent{\it{Proof of Lemma \ref{parameter lem}}.}
We first claim that for $i=1,2$ if
\begin{equation}\label{claim}
\theta_i>s\quad\mbox{and}\quad q>\frac{\theta_i}{2}-\frac{s}{n},
\end{equation}
then
\begin{equation*}
\kappa_i(p,q; s)\in(0,1)\quad\mbox{and}\quad f_i(p,q; s)<2.
\end{equation*}
Indeed, a direct computation shows that the first inequality in (\ref{claim}) is equivalent to $\kappa_i(p,q; s)>0$. On the other hand, $\kappa_i(p,q; s)<1$ is equivalent to
\begin{equation*}
q>\frac{\theta_i}{2}-\frac{\theta_i}{n},
\end{equation*}
which is weaker then the second inequality in (\ref{claim}) because of $\theta_i>s$. Moreover, we have $f_i(p,q; s)<2$ if and only if
\begin{equation*}
\frac{\theta_i}{s}-1<\frac{2q}{s}-2(\frac{1}{2}-\frac{1}{n}),
\end{equation*}
which is true due to the second inequality in (\ref{claim}). The claim is proved.

Now we consider the case $s=\frac{n}{n-1}$. Note that (\ref{claim}) is fulfilled for $\theta_1$ and $\theta_2$ if
\begin{equation}\label{claim11}
p>\frac{n(\gamma+1-\alpha-2\beta)}{2(n-1)}-(\gamma-1),\qquad  q>\frac{n}{2(n-1)}+1
\end{equation}
and
\begin{equation}\label{claim12}
\frac{p+\gamma-1}{\gamma+1-\alpha-2\beta}-\frac{1}{n-1}<q<\frac{n}{2(n-1)}p+\frac{n}{2(n-1)}(\gamma-1)-\frac{1}{n-1}.
\end{equation}
One can easily remark that $q$ exists if
\begin{equation*}
\frac{p+\gamma-1}{\gamma+1-\alpha-2\beta}<\frac{n}{2(n-1)}p+\frac{n}{2(n-1)}(\gamma-1),
\end{equation*}
which can be achieved when $\alpha+2\beta<\gamma-1+\frac{2}{n}$. Fix
\begin{equation*}
p_0:=\max\left\{1, 3-\gamma, 2-\alpha-2\beta, \frac{3n(\gamma+1-\alpha-2\beta)}{2(n-1)}-(\gamma-1)\right\}.
\end{equation*}
Then, for any $p>p_0$, we can choose $q$ such that $p$ and $q$ satisfy (\ref{claim11}) and (\ref{claim12}). Hence we have (\ref{parameter1}).

When $s=2$, (\ref{claim}) is satisfied for $\theta_1$ and $\theta_2$ if
\begin{equation}\label{claim21}
p>2-\alpha-2\beta,\qquad  q>2
\end{equation}
and
\begin{equation}\label{claim22}
\frac{p+\gamma-1}{\gamma+1-\alpha-2\beta}-\frac{2}{n}<q<\frac{n+2}{2n}p+\frac{n+2}{2n}(\gamma-1)-\frac{2}{n}.
\end{equation}
Set
\begin{equation*}
\bar{p}_0:=\max\left\{1, 3-\gamma, 2-\alpha-2\beta, \frac{(2n+2)(\gamma+1-\alpha-2\beta)}{n}-(\gamma-1)\right\}.
\end{equation*}
Following in the same way as in (\ref{parameter1}), we have for arbitrary $p>\bar{p}_0$, there exists $q$ such that $p$ and $q$ fulfill (\ref{claim21}) and (\ref{claim22}). This completes the proof.$\hfill\Box$

\section{Proof of Theorem \ref{main result}}
According to test-function arguments and interpolation arguments along with the basic estimates in Lemma \ref{basic est.}, we have the boundedness of $\int_{\Omega}u^p$ with $p\geq1$.

\begin{lem}\label{p-q est.}
Suppose that $\Omega\subset\mathbb{R}^n$ ($n\geq2$). Let (\ref{D})-(\ref{logistic source}) hold with $M_1>0$, $M_2>0$, $\alpha\in\mathbb{R}$, $\beta\in\mathbb{R}$, $a\geq0$, $\mu>0$ and $\gamma\geq1$. Assume that
\begin{eqnarray*}
  \alpha+2\beta<\left\{\begin{array}{lll}
     \medskip
     \gamma-1+\frac{2}{n},&{} \mbox{if}\ 1\leq\gamma<2,\\
     \medskip
     \gamma-1+\frac{4}{n+2},&{} \mbox{if}\ \gamma\geq2.
  \end{array}\right.
\end{eqnarray*}
Then for all $p\in[1,\infty)$ there exists $C>0$ such that
\begin{equation*}
\|u(\cdot,t)\|_{L^p(\Omega)}\leq C\quad\mbox{for all}\ t\in(0, T_{\max}).
\end{equation*}
\end{lem}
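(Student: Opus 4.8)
The plan is to prove the $L^p$-bound by an induction-type bootstrap on $p$, coupled with a Moser–Alikakos-style iteration fed by the $L^1$- and $L^s$-bounds of Lemma \ref{basic est.} and the parameter choices made available in Lemma \ref{parameter lem}. First I would fix $p$ large (at least as large as the thresholds $p_0$ or $\bar p_0$ from Lemma \ref{parameter lem}, and also large enough that the final $L^p$-estimate, once obtained, bootstraps upward to cover every smaller exponent by H\"older's inequality on the bounded domain). Testing the first equation of (\ref{KS}) with $u^{p-1}$, integrating by parts, and using (\ref{D})--(\ref{logistic source}) produces the basic differential inequality of the shape
\begin{equation*}
\frac{d}{dt}\int_\Omega u^p + c_1\int_\Omega \bigl|\nabla u^{\frac{p-\alpha}{2}}\bigr|^2 + c_2\int_\Omega u^{p+\gamma-1}
\le c_3\int_\Omega u^{p+\beta-1}\,|\nabla v| + c_4,
\end{equation*}
where the cross term comes from the chemotactic flux after an integration by parts (moving one derivative onto $u^{p-1}$) and a Young inequality against $\nabla v$, and the $u^{p+\gamma-1}$ term is the absorbing contribution of the logistic source. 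The quantity $\int_\Omega |\nabla u^{(p-\alpha)/2}|^2$ is, up to renaming, $\|\nabla w\|_{L^2}^2$ with $w := u^{(p-\alpha)/2}$, i.e.\ control of $u^{p-\alpha}$ in $W^{1,2}$, hence of $u$ in a high Lebesgue norm via Gagliardo–Nirenberg.

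The heart of the argument is to absorb the right-hand cross term $\int_\Omega u^{p+\beta-1}|\nabla v|$ using H\"older's inequality with the $L^s$-bound on $\nabla v$ (with $s=\frac{n}{n-1}$ when $1\le\gamma<2$, and $s=2$ when $\gamma\ge2$, exactly the dichotomy in Lemma \ref{basic est.}), followed by a Gagliardo–Nirenberg interpolation of $\|u\|_{L^{(p+\beta-1)s'}}$ between $\|\nabla u^{(p-\alpha)/2}\|_{L^2}$ and a lower-order norm, and finally Young's inequality to split off a small multiple of the good gradient term plus a power of a lower-order $L^q$-norm of $u$. The bookkeeping of exponents here is precisely what the definitions (\ref{theta1})--(\ref{kappa i}) and the conditions $\kappa_i\in(0,1)$, $f_i<2$ encode: $\theta_1$ governs the interpolation coming from the chemotaxis term, $\theta_2$ the interpolation needed to handle the auxiliary $L^q$-norm that is generated, $\kappa_i\in(0,1)$ guarantees the Gagliardo–Nirenberg interpolation is admissible, and $f_i<2$ guarantees that after Young's inequality the exponent on the $\|\nabla u^{(p-\alpha)/2}\|_{L^2}$ factor is strictly below $2$, so it can genuinely be absorbed. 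Lemma \ref{parameter lem} is exactly the statement that such $q>1$ exists under the hypothesized inequality on $\alpha+2\beta$; the accompanying remark lets me replace $s=\frac{n}{n-1}$ by a slightly smaller $s$ so that the $L^s$-bound of Lemma \ref{basic est.} genuinely applies in the case $1\le\gamma<2$.

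After the absorption step I expect to arrive at a differential inequality of the form
\begin{equation*}
\frac{d}{dt}\int_\Omega u^p + \varepsilon\int_\Omega u^{p+\gamma-1} + \int_\Omega u^p \le C\Bigl(\int_\Omega u^q\Bigr)^{\lambda} + C,
\end{equation*}
with $q<p$ chosen as above and some $\lambda\ge1$. If $q\le1$ we are done immediately by (\ref{u1}); otherwise one runs the whole scheme at the smaller exponent $q$ first (this is why the conclusion is for \emph{all} $p$, proved by a finite induction descending in steps to $p=1$, or equivalently ascending from $p=1$), obtains an a priori bound on $\int_\Omega u^q$, feeds it back, and then an ODE comparison argument (the term $\int_\Omega u^p$ on the left dominating via $\int_\Omega u^p \le \varepsilon^{-1}\int_\Omega u^{p+\gamma-1}+C$, or directly Gr\"onwall) yields $\int_\Omega u^p \le C$ on $(0,T_{\max})$, with $C$ depending on $p$, the data, and the parameters but not on $T_{\max}$. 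The main obstacle is the exponent arithmetic in the absorption step: one must verify that the Gagliardo–Nirenberg exponents produced by testing with $u^{p-1}$, after accounting for the shift $u\mapsto u^{(p-\alpha)/2}$ and the Hölder pairing with $\nabla v\in L^s$, are precisely the $\kappa_i, f_i$ of the preliminaries and that they lie in the admissible ranges — this is where all the hypotheses on $\alpha,\beta,\gamma,n$ are consumed, and where the case split $1\le\gamma<2$ versus $\gamma\ge2$ (hence $s=\frac{n}{n-1}$ versus $s=2$) is forced. A minor separate point is the case $n=1$, which is excluded from Lemma \ref{parameter lem} and must be handled directly (there the $L^1$-bound and one-dimensional Sobolev embedding already give everything), and the passage from the $L^p$-bound for large $p$ to boundedness of $u$ in $L^\infty$, which is a further Moser iteration / semigroup estimate using the $v$-equation — but the latter is the content of the subsequent (unstated) steps toward Theorem \ref{main result}, not of this lemma.
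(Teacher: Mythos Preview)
Your proposal misses the central mechanism of the proof and misreads what the quantities $\theta_i$, $\kappa_i$, $f_i$ of Lemma~\ref{parameter lem} encode. The paper does \emph{not} close the estimate using only the a priori bound $\|\nabla v\|_{L^s}\le C$ from Lemma~\ref{basic est.}; that bound is far too weak. Instead, the paper couples the $u$-inequality with a second differential inequality for $\int_\Omega|\nabla v|^{2q}$, obtained by testing the identity $\tfrac12(|\nabla v|^2)_t=\tfrac12\Delta|\nabla v|^2-|D^2v|^2-|\nabla v|^2+\nabla u\cdot\nabla v$ with $(|\nabla v|^2)^{q-1}$. This produces, after handling a boundary term (which is exactly where the non-convex domain issue enters), a dissipative term $\|\nabla|\nabla v|^q\|_{L^2}^2$ and a forcing term $\int_\Omega u^2|\nabla v|^{2q-2}$. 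One then works with the combined functional $y(t)=\tfrac1p\int(u+1)^p+\tfrac1{2q}\int|\nabla v|^{2q}$. Both cross terms --- $\int(u+1)^{p+\alpha+2\beta-2}|\nabla v|^2$ from the $u$-equation and $\int(u+1)^2|\nabla v|^{2q-2}$ from the $v$-equation --- are split by Young's inequality against the logistic absorption term $\int(u+1)^{p+\gamma-1}$, producing the pure gradient terms $\int|\nabla v|^{\theta_1}$ and $\int|\nabla v|^{\theta_2}$. These are then controlled by Gagliardo--Nirenberg applied to $|\nabla v|^q$ (not to $u^{(p-\alpha)/2}$), interpolating between $\|\nabla|\nabla v|^q\|_{L^2}$ and $\||\nabla v|^q\|_{L^{s/q}}$: the parameters $\kappa_i$ are the GN exponents for this interpolation, and $f_i<2$ is precisely the condition that the resulting power of $\|\nabla|\nabla v|^q\|_{L^2}$ is absorbable. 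There is no bootstrap in $p$; one gets a closed ODE $y'+c\,y\le C$ directly.

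Your scheme --- H\"older on $\int u^{p+\beta-1}|\nabla v|$ (or on $\int (u+1)^{p+\alpha+2\beta-2}|\nabla v|^2$) using only $\nabla v\in L^s$, then GN on $u$ --- fails at the H\"older step. For $1\le\gamma<2$ you have $s<\tfrac{n}{n-1}\le 2$, so $|\nabla v|^2$ is not even in $L^1$ with a uniform bound of the right kind, and the three-factor H\"older with $\|\nabla u^{(p-\alpha)/2}\|_{L^2}$ and $\|\nabla v\|_{L^s}$ leaves no room ($\tfrac12+\tfrac1s\ge 1$). For $\gamma\ge2$ you have $s=2$, which forces the remaining factor of $u$ into $L^\infty$, again useless. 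The missing ingredient is exactly the higher gradient control $\|\nabla|\nabla v|^q\|_{L^2}^2$ coming from the $v$-equation, which your outline never derives. Consequently your identification of $\theta_1,\theta_2$ with ``interpolation coming from the chemotaxis term'' and ``the auxiliary $L^q$-norm of $u$'' is incorrect: $\theta_1,\theta_2$ are the exponents of the $|\nabla v|$-integrals that remain after Young, and $q$ is the exponent in $\int|\nabla v|^{2q}$, not a Lebesgue exponent for $u$.
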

\begin{proof}
Multiplying the first equation in (\ref{KS}) by the test function $(u+1)^{p-1}$, integrating by parts and using (\ref{logistic source}), we obtain
\begin{eqnarray}\label{test1}
&&\frac{1}{p}\frac{d}{dt}\int_{\Omega}(u+1)^p+(p-1)\int_{\Omega}(u+1)^{p-2}D(u)|\nabla u|^2+\frac{\mu}{2^{\gamma}}\int_{\Omega}(u+1)^{p+\gamma-1}\nonumber\\
&&\quad\leq(p-1)\int_{\Omega}(u+1)^{p-2}S(u)\nabla u\cdot\nabla v+(a+\mu)\int_{\Omega}(u+1)^{p-1}\quad\mbox{for all}\ t\in(0, T_{\max})
\end{eqnarray}
Here, by (\ref{D}),
\begin{equation}\label{test11}
(p-1)\int_{\Omega}(u+1)^{p-2}D(u)|\nabla u|^2\geq M_1(p-1)\int_{\Omega}(u+1)^{p-\alpha-2}|\nabla u|^2\quad\mbox{for all}\ t\in(0, T_{\max}),
\end{equation}
from (\ref{S}) and Young's inequality,
\begin{eqnarray}\label{test12}
&&(p-1)\int_{\Omega}(u+1)^{p-2}S(u)\nabla u\cdot\nabla v\nonumber\\
&&\quad\leq\frac{M_1(p-1)}{2}\int_{\Omega}(u+1)^{p-\alpha-2}|\nabla u|^2+\frac{p-1}{2M_1}\int_{\Omega}(u+1)^{p+\alpha-2}S^2(u)|\nabla v|^2\nonumber\\
&&\quad\leq\frac{M_1(p-1)}{2}\int_{\Omega}(u+1)^{p-\alpha-2}|\nabla u|^2+\frac{M^2_2(p-1)}{2M_1}\int_{\Omega}(u+1)^{p+\alpha+2\beta-2}|\nabla v|^2
\end{eqnarray}
for all $t\in(0, T_{\max})$, and again using Young's inequality,
\begin{equation}\label{test13}
(a+\mu)\int_{\Omega}(u+1)^{p-1}\leq\frac{\mu}{2^{\gamma+1}}\int_{\Omega}(u+1)^{p+\gamma-1}+C_1\quad\mbox{for all}\ t\in(0, T_{\max})
\end{equation}
with $C_1>0$. Inserting (\ref{test11})-(\ref{test13}) into (\ref{test1}) yields
\begin{eqnarray}\label{d-test1}
&&\frac{1}{p}\frac{d}{dt}\int_{\Omega}(u+1)^p+\frac{2M_1(p-1)}{(p-\alpha)^2}\int_{\Omega}|\nabla(u+1)^{\frac{p-\alpha}{2}}|^2
+\frac{\mu}{2^{\gamma+1}}\int_{\Omega}(u+1)^{p+\gamma-1}\nonumber\\
&&\quad\leq\frac{M^2_2(p-1)}{2M_1}\int_{\Omega}(u+1)^{p+\alpha+2\beta-2}|\nabla v|^2+C_1\quad\mbox{for all}\ t\in(0, T_{\max}).
\end{eqnarray}
Next, differentiating the second equation in (\ref{KS}) and using the identity
\begin{equation*}
\frac{1}{2}\Delta|\nabla v|^2=\nabla(\Delta v)\cdot\nabla v+|D^2v|^2,
\end{equation*}
where $|D^2v|^2=\sum_{1\leq i,j\leq n}(v_{x_ix_j})^2$, we obtain
\begin{equation*}
\frac{1}{2}\left(|\nabla v|^2\right)_t=\frac{1}{2}\Delta|\nabla v|^2-|D^2v|^2-|\nabla v|^2+\nabla u\cdot\nabla v
\end{equation*}
for all $(x,t)\in\Omega\times(0, T_{\max})$. We test this by $(|\nabla v|^2)^{q-1}$ and integrate over $\Omega$ to have
\begin{eqnarray}\label{test2}
&&\frac{1}{2q}\frac{d}{dt}\int_{\Omega}|\nabla v|^{2q}+\frac{q-1}{2}\int_{\Omega}|\nabla v|^{2q-4}\left|\nabla|\nabla v|^2\right|^2+\int_{\Omega}|\nabla v|^{2q}+\int_{\Omega}|\nabla v|^{2q-2}|D^2v|^2\nonumber\\
&&\quad=\int_{\Omega}|\nabla v|^{2q-2}\nabla u\cdot\nabla v+\frac{1}{2}\int_{\partial\Omega}|\nabla v|^{2q-2}\frac{\partial|\nabla v|^2}{\partial\nu}d{\bf{S}}\quad\mbox{for all}\ t\in(0, T_{\max}).
\end{eqnarray}
Integrating by parts and twice applying Young's inequality, we estimate
\begin{eqnarray}\label{test21}
\int_{\Omega}|\nabla v|^{2q-2}\nabla u\cdot\nabla v&=&-(q-1)\int_{\Omega}u|\nabla v|^{2q-4}\nabla|\nabla v|^2\cdot\nabla v-\int_{\Omega}u|\nabla v|^{2q-2}\Delta v\nonumber\\
                                                   &\leq&\frac{q-1}{12}\int_{\Omega}|\nabla v|^{2q-4}\left|\nabla|\nabla v|^2\right|^2
                                                         +3(q-1)\int_{\Omega}u^2|\nabla v|^{2q-2}\nonumber\\
                                                   &\quad&+\int_{\Omega}|\nabla v|^{2q-2}|D^2v|^2
                                                   +\frac{n}{4}\int_{\Omega}u^2|\nabla v|^{2q-2}
\end{eqnarray}
for all $t\in(0, T_{\max})$, where we have used that $\frac{1}{n}|\Delta v|^2\leq |D^2v|^2$. As for the second term on the right-hand side of (\ref{test2}), we use the inequality \cite[Lemma 4.2]{MS}
\begin{equation*}
\frac{\partial|\nabla w|^2}{\partial\nu}\leq2k|\nabla w|^2\quad\mbox{on}\ \partial\Omega,
\end{equation*}
with $k=k(\Omega)>0$ is an upper bound of the curvature of $\partial\Omega$, and apply the trace inequality \cite[Remark 52.9]{QS}
\begin{equation*}
\|w\|_{L^2(\partial\Omega)}\leq\eta\|\nabla w\|_{L^2(\Omega)}+C(\eta)\|w\|_{L^2(\Omega)}
\end{equation*}
to deduce that
\begin{eqnarray}\label{test22}
\frac{1}{2}\int_{\partial\Omega}|\nabla v|^{2q-2}\frac{\partial|\nabla v|^2}{\partial\nu}d{\bf{S}}&=&\frac{1}{2q}\int_{\partial\Omega}\frac{\partial\left(|\nabla v|^q\right)^2}{\partial\nu}d{\bf{S}}\nonumber\\
                           &\leq&\frac{k}{q}\left\||\nabla v|^q\right\|^2_{L^2(\partial\Omega)}\nonumber\\
                           &\leq&\frac{q-1}{3q^2}\left\|\nabla|\nabla v|^q\right\|^2_{L^2(\Omega)}+C_2\left\||\nabla v|^q\right\|^2_{L^2(\Omega)}
\end{eqnarray}
for all $t\in(0, T_{\max})$ with some $C_2>0$. From the Gagliardo-Nirenberg inequality, we know that there exist $C_3>0$ and $C_4>0$ such that
\begin{equation*}
\left\||\nabla v|^q\right\|^2_{L^2(\Omega)}\leq C_3\left\|\nabla|\nabla v|^q\right\|^{2\kappa}_{L^2(\Omega)}\left\||\nabla v|^q\right\|^{2(1-\kappa)}_{L^{\frac{s}{q}}(\Omega)}+C_4\left\||\nabla v|^q\right\|^{2}_{L^{\frac{s}{q}}(\Omega)}
\end{equation*}
for all $t\in(0, T_{\max})$, where
\begin{equation*}
\kappa=\frac{\frac{q}{s}-\frac{1}{2}}{\frac{q}{s}-\frac{1}{2}+\frac{1}{n}}\in(0,1)
\end{equation*}
with $s\in[1, \frac{n}{n-1})$. According to (\ref{nabla vs}) as well as Young's inequality, we have
\begin{equation}\label{GN1}
C_2\left\||\nabla v|^q\right\|^2_{L^2(\Omega)}\leq \frac{q-1}{3q^2}\left\|\nabla|\nabla v|^q\right\|^{2}_{L^2(\Omega)}+C_5\quad\mbox{for all}\ t\in(0, T_{\max}).
\end{equation}
Now collecting (\ref{test2})-(\ref{GN1}),  we get
\begin{eqnarray}\label{d-test2}
&&\frac{1}{2q}\frac{d}{dt}\int_{\Omega}|\nabla v|^{2q}+\frac{q-1}{q^2}\left\|\nabla|\nabla v|^q\right\|^{2}_{L^2(\Omega)}+\int_{\Omega}|\nabla v|^{2q}\nonumber\\
&&\quad\leq\left(3(q-1)+\frac{n}{4}\right)\int_{\Omega}u^2|\nabla v|^{2q-2}+C_5\quad\mbox{for all}\ t\in(0, T_{\max}).
\end{eqnarray}
Combining (\ref{d-test1}) with (\ref{d-test2}), it follows that
\begin{eqnarray*}
&&\frac{d}{dt}\left(\frac{1}{p}\int_{\Omega}(u+1)^p+\frac{1}{2q}\int_{\Omega}|\nabla v|^{2q}\right)\nonumber\\
&&\quad\quad+\frac{2M_1(p-1)}{(p-\alpha)^2}\int_{\Omega}|\nabla(u+1)^{\frac{p-\alpha}{2}}|^2+\frac{q-1}{q^2}
             \left\|\nabla|\nabla v|^q\right\|^{2}_{L^2(\Omega)}\nonumber\\
&&\quad\quad+\frac{\mu}{2^{\gamma+1}}\int_{\Omega}(u+1)^{p+\gamma-1}+\int_{\Omega}|\nabla v|^{2q}\nonumber\\
&&\quad\leq C_6\int_{\Omega}(u+1)^{p+\alpha+2\beta-2}|\nabla v|^2+C_7\int_{\Omega}(u+1)^2|\nabla v|^{2q-2}+C_1+C_5
\end{eqnarray*}
for all $t\in(0, T_{\max})$ with certain positive constants $C_6$ and $C_7$. Since $\alpha+2\beta<\gamma+1$, by Young's inequality we can find
$C_8$ and $C_9$ such that
\begin{eqnarray}\label{d-test1+d-test2}
&&\frac{d}{dt}\left(\frac{1}{p}\int_{\Omega}(u+1)^p+\frac{1}{2q}\int_{\Omega}|\nabla v|^{2q}\right)\nonumber\\
&&\quad\quad+\frac{2M_1(p-1)}{(p-\alpha)^2}\int_{\Omega}|\nabla(u+1)^{\frac{p-\alpha}{2}}|^2+\frac{q-1}{q^2}
             \left\|\nabla|\nabla v|^q\right\|^{2}_{L^2(\Omega)}\nonumber\\
&&\quad\quad+\frac{\mu}{2^{\gamma+1}}\int_{\Omega}(u+1)^{p+\gamma-1}+\int_{\Omega}|\nabla v|^{2q}\nonumber\\
&&\quad\leq \frac{\mu}{2^{\gamma+2}}\int_{\Omega}(u+1)^{p+\gamma-1}
+C_8\int_{\Omega}|\nabla v|^{\theta_1}+C_8\int_{\Omega}|\nabla v|^{\theta_2}+C_{9},
\end{eqnarray}
where $\theta_1$ and $\theta_2$ are given by (\ref{theta1}) and (\ref{theta2}), respectively. According to the Gagliardo-Nirenberg inequality, for $i=1,2$ we can pick $C_{10}>0$ such that
\begin{eqnarray}\label{GNi}
C_8\int_{\Omega}|\nabla v|^{\theta_i}&=&\left\||\nabla v|^q\right\|^{\frac{\theta_i}{q}}_{L^{\frac{\theta_i}{q}}(\Omega)}\nonumber\\
&\leq& C_{10}\left(\left\|\nabla|\nabla v|^q\right\|^{\kappa_i}_{L^2(\Omega)}
      \left\||\nabla v|^q\right\|^{1-\kappa_i}_{L^{\frac{s}{q}}(\Omega)}
      +\left\||\nabla v|^q\right\|_{L^{\frac{s}{q}}(\Omega)}\right)^{\frac{\theta_i}{q}}
\end{eqnarray}
for all $t\in(0, T_{\max})$, where $\kappa_i$ is defined by (\ref{kappa i}) . If $1<\gamma<2$, we choose $s=\frac{n}{n-1}$ in (\ref{GNi}). Due to (\ref{nabla vs}), (\ref{parameter1}) and the Young's inequality show that with some $C_{11}>0$ we have for $i=1,2$
\begin{eqnarray}\label{GN1}
C_8\int_{\Omega}|\nabla v|^{\theta_i}\leq\frac{q-1}{4q^2}\left\|\nabla|\nabla v|^q\right\|^{2}_{L^2(\Omega)}+C_{11}\quad\mbox{for all}\ t\in(0, T_{\max}).
\end{eqnarray}
If $\gamma\geq2$, we set $s=2$ in (\ref{GNi}). Then in view of (\ref{nabla v2}), (\ref{parameter2}) and the Young's inequality, we obtain $C_{12}$ such that
\begin{eqnarray}\label{GN2}
C_8\int_{\Omega}|\nabla v|^{\theta_i}\leq\frac{q-1}{4q^2}\left\|\nabla|\nabla v|^q\right\|^{2}_{L^2(\Omega)}+C_{12}\quad\mbox{for all}\ t\in(0, T_{\max})
\end{eqnarray}
for $i=1,2$. Set
\begin{equation*}
y(t):=\frac{1}{p}\int_{\Omega}(u+1)^p+\frac{1}{2q}\int_{\Omega}|\nabla v|^{2q}.
\end{equation*}
According to Young's inequality we can find $C_{13}$ such that
\begin{equation}\label{Young}
\frac{\mu}{2^{\gamma+2}}\int_{\Omega}(u+1)^{p+\gamma-1}\geq\int_{\Omega}(u+1)^{p}-C_{13}\quad\mbox{for all}\ t\in(0, T_{\max})
\end{equation}
and therefore in conjunction with (\ref{d-test1+d-test2}), (\ref{GN1}) and (\ref{GN2}) and (\ref{Young}) we see that
\begin{equation}\label{ODI}
y'(t)+C_{14}y(t)\leq C_{15}\quad\mbox{for all}\ t\in(0, T_{\max})
\end{equation}
with positive constants $C_{14}$ and $C_{15}$. This completes the proof.
\end{proof}

We can now pass to the proof of our main result.

\noindent{\it{Proof of Theorem \ref{main result}}.} With the aid of Lemma \ref{p-q est.}, \cite[Lemma A.1]{Tao&Winkler-JDE-2012} and Lemma \ref{local existence}, we obtain the boundedness of $u$. Then the boundedness of $v$ is based on a standard argument involving the variation-of-constants representation for $v$ and the smoothing estimates for heat semigroup.$\hfill\Box$



\end{document}